\newtheorem{theorem}{Theorem}
\theoremstyle{plain}
\newtheorem{acknowledgement}{Acknowledgement}
\newtheorem{corollary}{Corollary}
\newtheorem{definition}{Definition}
\newtheorem{example}{Example}
\newtheorem{remark}{Remark}
\numberwithin{equation}{section}
\begin{document}
\title{On the classification of $G$-graded twisted algebras}
\author{Juan D. V\'{e}lez, Luis A.Wills, Natalia Agudelo}
\maketitle

\begin{abstract}
Let $G$ denote a group and let $W$ be an algebra over a commutative ring $R$%
. We will say that $W$ is a $G$-graded twisted algebra (not necessarily
commutative, neither associative) if there exists a $G$-grading $W=\oplus
_{g\in G}W_{g}$ where each summand $W_{g}$ is a free rank one $R$ -module,
and $W$ has no monomial zero divisors (for each pair of nonzero elements%
\textit{\ }$w_{a},w_{b}$ en $W_{a}$ and $W_{b}$ their product is not zero, $%
w_{a}w_{b}\neq 0$)$.$ It is also assumed that $W$ has an identity element.

In this article, methods of group cohomology are used to study the general
problem of classification under graded isomorphisms. We give a full
description of these algebras in the associative cases, for complex and real
algebras. In the nonassociative case, an analogous result is obtained under
a symmetry condition of the corresponding associative function of the
algebra, and when the group providing the grading is finite cyclic.
\end{abstract}

\section{Introduction}

$G$-graded twisted algebras were introduced in \cite{EdLew}, and
independently in \cite{LWTPhDthesis}, as distinguished mathematical
structures which arise naturally in theoretical physics \cite{Lawt1}, \cite%
{Lawt2}, \cite{Lawt3}, \cite{Lawt4}, \cite{Lawt5}, and \cite{EdLew1}. By one
of these algebras we mean the following: Let $G$ denote a group. An $R$%
-algebra $W$ (not necessarily commutative, neither associative) will be
called a $G$\textit{-graded twisted algebra }if there exists a $G$-grading,
i.e., $W=\oplus _{g\in G}W_{g},$ with $W_{a}W_{b}\subset W_{ab}$, in which
each summand $W_{g}$ is an $R$ -module of free rank one. We assume that $W$
has an identity element $1\in W_{e}$, where $W_{e}$ denotes the graded
component corresponding to the identity element $e$ of $G.$ We also require
that $W$ has no \textit{monomial zero divisors}, i.e., for each pair of
nonzero elements\textit{\ }$w_{a}\in W_{a},$ and $w_{b}\in W_{b},$ their
product must be non zero, $w_{a}w_{b}\neq 0$ (for a general study of
nonassociative algebras, the reader may consult \cite{Schafer}.)

Besides its interest for physicists, these algebras are natural objects of
study for mathematicians, since they are related to generalizations of Lie
algebras that include transformation parameters with noncommutative and/or
nonassociative properties, as defined in \cite{SPJMS}, where generalizations
of some results of Scheunert \cite{Scheunert} on epsilon or color (super)
Lie algebras are discussed. One important case, that of $G$-graded twisted 
\textit{division} algebras (in which every nonzero element has a left and a
right inverse, but where the left and right inverses do not necessarily
coincide) already includes the classical quaternion and octonion division
algebras. They are a new remarkable class of (noncommutative and
nonassociative) division algebras over the reals. Their classification is
addressed in \cite{PapII}, \cite{PapIII}.

For the general case of $G$-graded twisted algebras over the complex
numbers, first attempts towards their classification appear in \cite{ICM2008}%
, where rudimentary techniques of group cohomology were introduced. Those
methods are exploited in this article to obtain necessary and sufficient
conditions for two algebras to be isomorphic under graded isomorphisms (see
Theorem \ref{clasifica cohomology} and its corollaries). On the other hand,
classical techniques of group representation theory readily give a complete
classification in the associative case (Corollaries \ref{july} and \ref%
{clasificacion asocitivo}.) This last result is achieved via Theorem \ref%
{fundamental}, which allows to represent these algebras as quotients of
certain group rings. A similar description is possible in the general, not
associative case, where instead of group rings the corresponding structures
are quotients of \textit{loop rings,} objects, notwithstanding, poorly
studied in the literature.

As for general nonassociative algebras, we deal in this article only with
the simplest cases. We study algebras over the complex and real numbers
which are graded by finite abelian groups, and whose associativity function
--see (\ref{rasocia}) below-- satisfies certain symmetry condition (Theorems %
\ref{complicado2} and \ref{complicado1}.)

\section{Definitions and basic concepts}

\begin{definition}
\emph{Let }$G$\emph{\ denote a group and let }$W$\emph{\ be an algebra over
a commutative ring }$R$\emph{. We will say that }$W$\emph{\ is a }$G$-graded
twisted algebra\emph{\ (not necessarily commutative neither associative) if
there exists a }$G$\emph{-grading }$W=\oplus _{g\in G}W_{g}$\emph{, with }$%
W_{a}W_{b}\subset W_{ab},$\emph{\ where each summand }$W_{g}$\emph{\ is a
free rank one }$R$\emph{\ -module. It is required that }$W$\emph{\ has no
monomial zero divisors. This condition means that for each pair of nonzero
elements\ }$w_{a}\in W_{a}$\emph{\ and }$w_{b}\in W_{b}$\emph{, }$%
w_{a}w_{b}\neq 0$\emph{. It is also assumed that }$W$\emph{\ has an identity
element }$1=w_{e}\in W_{e}$\emph{, where }$e$\emph{\ denotes the identity
element of }$G.$
\end{definition}

Since each graded component $W_{g}$ is a free rank one $R$ -module, we may
choose $w_{g}\in W_{g}$ such that $B=\{w_{g}:g\in G\}$ is a basis for $W$ as
an $R$-module. For any such fixed basis $B$ we may associate a function (the 
\emph{structure constant with respect to }$B$) $C_{B}:G\times G\rightarrow
R^{\ast }$, which takes values in $R^{\ast }=R-\{0\},$ and such that for any
pair of elements $w_{a}\in W_{a}$, $w_{b}\in W_{b}$, $%
w_{a}w_{b}=C_{B}(a,b)w_{ab}.$ We notice that $w_{a}w_{e}=C_{B}(a,e)w_{a}$,
implies $C_{B}(a,e)=1,$ for all $a$ in $G.$ The fact that $W$ has no
monomial zero divisors implies that $C_{B}$ must take values in a a
subdomain $A$ of the multiplicative group $R^{\ast }.$ If we are interested
in stressing this fact, we will write $C_{B}:G\times G\rightarrow A$. On the
other hand, if $B$ is understood, we will simply write $C:G\times
G\rightarrow A,$ omitting the subscript.

If $R$ is a field, and $A\subset R,$ a subfield, the commutative and
associative properties of $W$ can be understood by means of the following
two functions, $q:G\times G\rightarrow A,$ and $r:G\times G\times
G\rightarrow A,$ defined as:%
\begin{eqnarray}
q(a,b) &=&C(a,b)C(b,a)^{-1}  \notag \\
r(a,b,c) &=&C(b,c)C(ab,c)^{-1}C(a,bc)C(a,b)^{-1}  \label{rasocia}
\end{eqnarray}%
If $G$ is abelian, it holds that $%
w_{a}(w_{b}w_{c})=r(a,b,c)(w_{a}w_{b})w_{c} $; and that $%
w_{a}w_{b}=q(a,b)w_{b}w_{a}.$

\begin{definition}
\label{morfismo}\emph{By a }morphism\emph{\ between two }$G$\emph{-graded
twisted algebras }$W=\oplus _{g\in G}W_{g}$\emph{\ and }$V=\oplus _{g\in
G}V_{g}$\emph{\ we mean an unitarian homomorphism of }$R$\emph{-algebras }$%
\phi :W\rightarrow V$\emph{. If it also preserves the grading, i.e., }$\phi
(W_{g})\subset V_{g},$\emph{\ we say the morphism is graded.}
\end{definition}

\begin{remark}
\emph{\label{lluvia}Let us fix }$B_{1}=\{w_{g}:g\in G\}$\emph{\ and }$%
B_{2}=\{v_{g}:g\in G\}$\emph{\ bases for }$W$\emph{\ and }$V,$\emph{\
respectively. Defining a }graded morphism\emph{\ }$\phi :W\rightarrow V$%
\emph{\ amounts to giving a function }$\varphi :G\rightarrow R$\emph{\ such
that }$\phi (w_{g})=\varphi (g)v_{g}.$\emph{\ Clearly, }$\phi $\emph{\ will
be an isomorphism if and only if for all }$g,$\emph{\ }$\varphi (g)$\emph{\
is a unit in }$R.$
\end{remark}

The classification problem of $G$-graded twisted algebras admits at least
two versions. \label{isomorfismos}

\begin{enumerate}
\item $W$ and $V$ can be isomorphic as \emph{graded} algebras, that is,
there are \emph{graded} isomorphisms $\phi :W\rightarrow V$ and $\psi
:V\rightarrow W$ such that $\phi \psi $ and $\psi \phi $ is the identity.

\item $W$ and $V$ can be isomorphic as $R$-algebras, without taking into
consideration the grading.
\end{enumerate}

In this article \emph{we will restrict to the case where }$R$\emph{\ is a
field}$,$ that we will denote by $k$\emph{. It will also be assumed that }$G$%
\emph{\ is a finite group.}

\section{$G$-graded twisted algebras and Loop rings}

Let $W$ be a $G$-graded twisted algebra, and let $C:G\times G\rightarrow
A\subset k^{\ast }$ be the structure constant with respect to a fixed basis $%
B$. The function $C$ gives rise to an extension $E$ of $A$ by $G$, that will
be denoted by $A\times _{C}G$, defined as follows: $E_{W}=\{(\alpha
,g):\alpha \in A,g\in G\}\subset A\times G$. We endow this set with the
following operation: 
\begin{equation*}
(\alpha ,a).(\beta ,b)=(\alpha \beta C(a,b),ab)\,.
\end{equation*}%
It is easy to see that whenever $W$ is associative, $E_{W}$ is a group,
actually an extension of $A$ by $G.$ In general, if $W$ is not associative,
then $E_{W}$ turns out to be a \textit{loop}, a structure which is almost a
group, except that its binary operation is not necessarily associative, but
where there is still an identity element, and where each element has a right
and a left inverse, not necessarily equal. As in the case of an ordinary
group, \textit{the loop ring} $R=k[E_{W}]$ may be defined analogously.

The structure of any associative $G$-graded twisted algebra can be
understood using the following fundamental result.

\begin{theorem}
\label{fundamental}Let $W=\oplus _{g\in G}W_{g}$, and let $C:G\times
G\rightarrow A$ be the structure constant related to a fixed choice of basis
for $W.$ Let $A\times _{C}G$ denote the extension of $A$ by $G$ defined
above, and let $R=k\left[ A\times _{C}G\right] $ be the Loop ring over $%
A\times _{C}G$. Let us define the vector subspace $I$ of $R$ generated by
all elements of the form $I=\left\langle (\alpha ,g)-\alpha (1,g):\alpha \in
A,g\in G\right\rangle $. Then $I\subset R$ is a bilateral ideal and $R/I$ is
a $k$-algebra isomorphic to $W.$
\end{theorem}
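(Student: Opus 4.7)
The plan is to construct a surjective $k$-algebra map $\pi\colon R\to W$ explicitly, identify its kernel with $I$, and then invoke the first isomorphism theorem. A welcome side effect of this strategy is that the bilateral ideal property of $I$ comes for free: the kernel of any algebra homomorphism is automatically a two-sided ideal, which saves a separate verification (and sidesteps any delicacy about ideals in the non-associative setting, should one wish to keep the loop-ring formulation).

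First I would define $\pi$ on the distinguished $k$-basis $\{(\alpha,g):\alpha\in A,\ g\in G\}$ of $R$ by $\pi((\alpha,g))=\alpha\,w_g$ and extend by $k$-linearity. Since the identities $C(e,g)=C(g,e)=1$ (already noted in the excerpt) force $(1,e)$ to be the identity of $E_W$, we have $\pi((1,e))=w_e=1_W$, so $\pi$ is unital. Multiplicativity then reduces to the single computation
\[
\pi\bigl((\alpha,a)(\beta,b)\bigr)=\pi\bigl((\alpha\beta C(a,b),\,ab)\bigr)=\alpha\beta C(a,b)\,w_{ab}=(\alpha w_a)(\beta w_b)=\pi((\alpha,a))\,\pi((\beta,b)),
\]
which works precisely because the product on $E_W$ was twisted by $C$ in order to mirror the product of $W$. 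Surjectivity is immediate, since $w_g=\pi((1,g))$ for every $g\in G$.

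The real content lies in showing $\ker(\pi)=I$. The inclusion $I\subseteq\ker(\pi)$ is clear from $\pi((\alpha,g)-\alpha(1,g))=\alpha w_g-\alpha w_g=0$. For the reverse inclusion, the key move is to use the defining relations of $I$ to reduce any $x=\sum_i c_i(\alpha_i,g_i)\in R$ modulo $I$ to a canonical form $\sum_g d_g(1,g)$, where $d_g=\sum_{i:\,g_i=g} c_i\alpha_i\in k$. Applying $\pi$ to such a representative yields $\sum_g d_g w_g$, so the linear independence of $\{w_g\}_{g\in G}$ in $W$ forces every $d_g=0$ and hence $x\in I$. The main obstacle I anticipate is essentially bookkeeping: in the original expansion of $x$ the same group element $g$ may appear paired with several distinct scalars $\alpha_i\in A$, so the reduction modulo $I$ is a genuine regrouping into the single scalar $d_g$ rather than a naive substitution. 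Once that accounting is handled cleanly, the canonical representatives $\{(1,g):g\in G\}$ descend to a $k$-basis of $R/I$, and $\pi$ descends to the claimed isomorphism $R/I\cong W$.
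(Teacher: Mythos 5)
Your proposal is correct, and its first half coincides with the paper's: both proofs take the $k$-linear extension of $(\alpha,g)\mapsto\alpha w_g$ as the map $R\to W$ and verify multiplicativity on the distinguished basis, where it reduces to the defining identity of the twisted product on $A\times_C G$. You diverge in how the isomorphism $R/I\cong W$ is concluded. The paper asserts that $I$ is a bilateral ideal as ``an elementary fact,'' checks only the inclusion $I\subset\ker(\varphi)$ so that $\varphi$ descends to $R/I$, and then exhibits a candidate inverse $W\to R/I$, $v_a\mapsto\overline{(1,a)}$, verifying that this second map is an algebra homomorphism (the fact that the two maps are mutually inverse is left implicit). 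You instead compute $\ker(\pi)$ exactly, by reducing an arbitrary element of $R$ modulo $I$ to the canonical form $\sum_g d_g(1,g)$ and invoking the linear independence of $\{w_g\}$, and then apply the first isomorphism theorem. Your route buys two things: the bilateral-ideal property of $I$ falls out automatically, since the kernel of an algebra homomorphism is two-sided even in the non-associative setting, and you avoid a second, essentially redundant homomorphism verification. What it costs is the regrouping bookkeeping you flag, which is genuine but routine since $I$ is by definition spanned by the differences $(\alpha,g)-\alpha(1,g)$, so $x-\sum_g d_g(1,g)\in I$ for every $x$. Both arguments are sound; yours is the more self-contained of the two.
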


\begin{proof}
It is an elementary fact that $I$ is a bilateral ideal of $R.$

Now, for $(\alpha ,a)\in A\times _{C}G$ let us define $\varphi :A\times
_{C}G\rightarrow W^{\ast }$ as the function which sends $(\alpha ,a)$ to $%
\alpha v_{a},$where $W^{\ast }$ denotes the invertible elements of $W.$
Clearly, $\varphi ((\alpha ,a)(\beta ,b))=\varphi (\alpha ,a)\varphi (\beta
,b),$ and consequently $\varphi $ can be extended to a $k$-linear map (that
we will denote by the same letter) $\varphi :R\rightarrow W.$ An easy
computation shows that $I\subset Ker(\varphi )$, and consequently $\varphi $
descends to the quotient $\varphi :R/I\rightarrow W,$ sending the class of $%
\tsum \lambda _{(\alpha ,a)}(\alpha ,a)$ into $\tsum \lambda _{(\alpha
,a)}\alpha v_{a}$. An inverse map can be defined explicitly by extending
linearly the map $\phi :W\rightarrow R/I$ that sends each element $v_{a}$ to
the class of $(1,a)$. $\phi $ is indeed a homomorphism of $k$ algebras: ("$-$%
" denotes the class of an element) 
\begin{eqnarray}
\phi (\tsum\limits_{a\in G}\lambda _{a}v_{a}\tsum\limits_{b\in G}\lambda
_{b}v_{b}) &=&\phi (\tsum\limits_{a,b\in G}\lambda _{a}\lambda
_{b}C(a,b)v_{ab})  \notag \\
&=&\phi (\tsum\limits_{g\in G}(\tsum\limits_{a+b=g}C(a,b)\lambda _{a}\lambda
_{b})v_{ab})  \notag \\
&=&\tsum\limits_{g\in G}\left( \tsum\limits_{a+b=g}C(a,b)\lambda _{a}\lambda
_{b}\right) (1,g)^{-}  \label{marca}
\end{eqnarray}%
A similar computation shows that (\ref{marca}) is equal to $\phi
(\tsum\limits_{a\in G}\lambda _{a}v_{a})\phi (\tsum\limits_{b\in G}\lambda
_{b}v_{b}).$
\end{proof}

\section{Associative $G$-graded algebras over $\mathbb{C}$}

Standard techniques of group representation theory allows us to completely
classify all $G$-graded twisted associative $\mathbb{C}$-algebras. For this,
let us fix a basis $B$ and let $C:G\times G\rightarrow A\subset 
\mathbb{C}
^{\ast }$ be the structure constant of $W$ with respect to $B.$ As observed
above, whenever $W$ is associative $E_{W}=A\times _{C}G$ is not only a loop,
but a group. Let $R=%
\mathbb{C}
\left[ A\times _{C}G\right] $ be its associated group ring. It is a standard
result that $R$ is the regular representation of $E_{W},$ and that if $R$
decomposes into irreducible representations $R=V_{1}^{a_{1}}\oplus \cdots
\oplus V_{r}^{a_{r}},$ with $V_{i}\neq V_{j}$, then the exponents $a_{i}$
can be computed in terms of the characters $\chi _{R}$ and $\chi _{V_{i}}$
as 
\begin{equation*}
a_{i}=\left\langle \chi _{R},\chi _{V_{i}}\right\rangle =\frac{1}{\left\vert
A\times _{C}G\right\vert }\tsum_{g\in G}\overline{\chi _{R}(g)}\chi
_{V_{i}}(g)=\dim V_{i}.
\end{equation*}%
(See \cite{Gordon Martin}, \cite{Sr}). Hence, the homomorphism $\varphi :%
\mathbb{C}
\left[ A\times _{C}G\right] \rightarrow \oplus _{i=1}^{r}Hom_{%
\mathbb{C}
}(V_{i},V_{i})$ that send each element $s\in 
\mathbb{C}
\left[ A\times _{C}G\right] $ into $(\psi _{i})_{i=1,\ldots .r}$, where $%
\psi _{i}:V_{i}\rightarrow V_{i}$ denotes multiplication by $s,$ is in fact
an isomorphism of $\mathbb{C}$-algebras (no grading involved) \cite{Sr}.
With notation as in Theorem \ref{fundamental}:

\begin{theorem}
There is an isomorphism (not necessarily graded) of $%
\mathbb{C}
$-algebras 
\begin{eqnarray*}
\varphi \overset{}{:}%
\mathbb{C}
\left[ A\times _{C}G\right] /I &\rightarrow &\oplus _{i=1}^{r}Hom_{%
\mathbb{C}
}(V_{i},V_{i})/J_{i} \\
\left[ s\right] &\mapsto &(\psi _{i})_{i=1,\ldots .r}
\end{eqnarray*}%
where $J=\varphi (I)$ is isomorphic to a product $J_{1}\times \cdots \times
J_{n}$, of bilateral ideals $J_{i}\subset Hom_{%
\mathbb{C}
}(V_{i},V_{i}).$
\end{theorem}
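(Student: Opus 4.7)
The plan is to combine Theorem~\ref{fundamental} with the decomposition of $\mathbb{C}[A\times_{C}G]$ as a sum of matrix algebras that has just been recalled. By Theorem~\ref{fundamental}, $W\cong \mathbb{C}[A\times_{C}G]/I$. The regular-representation map $\varphi:\mathbb{C}[A\times_{C}G]\to \oplus_{i=1}^{r}Hom_{\mathbb{C}}(V_{i},V_{i})$ is an isomorphism of $\mathbb{C}$-algebras (no grading involved), so it descends to an isomorphism between $\mathbb{C}[A\times_{C}G]/I$ and $\bigl(\oplus_{i}Hom_{\mathbb{C}}(V_{i},V_{i})\bigr)/J$, where $J:=\varphi(I)$ is the image bilateral ideal. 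This already furnishes the displayed map of the statement; the substantive content is the structural assertion that $J$ splits as a product $J_{1}\times\cdots\times J_{r}$.

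To get that decomposition I would invoke the standard lemma that in a finite direct product $S=S_{1}\times\cdots\times S_{r}$ of unital rings, every two-sided ideal $J\subset S$ equals $e_{1}J\times\cdots\times e_{r}J$, where $e_{i}\in S$ is the central idempotent corresponding to the unit of $S_{i}$. The proof is the routine one: $1=\sum e_{i}$ and $e_{i}e_{j}=0$ for $i\neq j$ give $J=\oplus_{i}e_{i}J$, while centrality of $e_{i}$ guarantees that each $e_{i}J$ is a bilateral ideal of $S_{i}$. Applied to $S_{i}=Hom_{\mathbb{C}}(V_{i},V_{i})$ and $J=\varphi(I)$, this produces the desired product $J\cong J_{1}\times\cdots\times J_{r}$, and the quotient isomorphism $(\oplus_{i}S_{i})/J\cong \oplus_{i}S_{i}/J_{i}$ follows at once.

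The only real obstacle is the idempotent/ideal-decomposition lemma; everything else is routine assembly of results already in the paper. As a byproduct worth recording, each $Hom_{\mathbb{C}}(V_{i},V_{i})$ is a simple $\mathbb{C}$-algebra, so each $J_{i}$ is forced to be either $(0)$ or the whole factor. Hence the ungraded $\mathbb{C}$-algebra isomorphism type of $W$ is encoded by a subset of the irreducible constituents of the regular representation of $A\times_{C}G$, namely those that survive in the quotient; this is the basis for the classification corollaries announced in the introduction.
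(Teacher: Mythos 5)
Your proposal is correct and follows essentially the same route as the paper: descend the regular-representation isomorphism to the quotient, split the image ideal $J$ into bilateral ideals $J_{i}$ of the matrix factors, and use simplicity of each $Hom_{\mathbb{C}}(V_{i},V_{i})$ to force $J_{i}=(0)$ or everything. The only difference is that you explicitly justify the splitting $J=J_{1}\times\cdots\times J_{r}$ via the central idempotents, a step the paper simply asserts.
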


\begin{proof}
The ideal $J$ decomposes as a product of bilateral ideals $J=J_{1}\times
\cdots \times J_{r},$ with $J_{i}\subset Hom_{%
\mathbb{C}
}(V_{i},V_{i})$. Hence, 
\begin{equation*}
W\cong R/I=\oplus _{i=1}^{r}Hom_{%
\mathbb{C}
}(V_{i},V_{i})/J_{i}.
\end{equation*}%
But each one of the algebras $Hom_{%
\mathbb{C}
}(V_{i},V_{i})$ is simple and consequently $J_{i}=0$ or $J_{i}=Hom_{%
\mathbb{C}
}(V_{i},V_{i})$. Thus, $W\cong \oplus _{i}Hom_{%
\mathbb{C}
}(V_{i},V_{i})$ where the sum occurs only for those $i$ such that $%
J_{i}=(0). $
\end{proof}

\begin{corollary}
\label{july}Let $W$ be a $G$-graded twisted associative $\mathbb{C}$%
-algebra. Then $W$ is isomorphic as a $%
\mathbb{C}
$-algebra to a finite product of matrix algebras \thinspace $%
Mat_{n_{i}\times n_{i}}(%
\mathbb{C}
),$ where $n_{i}=\dim V_{i}.$The algebra $W$ is commutative if and only if $%
n_{i}=\dim V_{i}=1,$ and therefore, if and only if $W\simeq \mathbb{C}\times
\cdots \times \mathbb{C}.$
\end{corollary}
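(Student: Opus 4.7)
The plan is to deduce this corollary almost directly from the preceding theorem, so I would present it as two short arguments rather than a long construction.

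First, I would invoke the previous theorem, which already identifies $W$ with $\oplus_{i}\mathrm{Hom}_{\mathbb{C}}(V_{i},V_{i})$, where the sum runs over those indices $i$ for which the ideal $J_{i}$ vanishes. The only remaining observation is the standard identification $\mathrm{Hom}_{\mathbb{C}}(V_{i},V_{i})\cong Mat_{n_{i}\times n_{i}}(\mathbb{C})$ obtained by choosing any $\mathbb{C}$-basis of the finite-dimensional space $V_{i}$, where $n_{i}=\dim V_{i}$. Composing these isomorphisms yields the required decomposition $W\cong \prod_{i} Mat_{n_{i}\times n_{i}}(\mathbb{C})$ (reindexed so that only the surviving factors appear).

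For the commutativity statement, I would argue in both directions. If every $n_{i}=1$, then each factor is $Mat_{1\times 1}(\mathbb{C})\cong \mathbb{C}$, and a finite product of copies of $\mathbb{C}$ is manifestly commutative. Conversely, suppose $W$ is commutative; since commutativity is preserved by isomorphism, $\prod_{i}Mat_{n_{i}\times n_{i}}(\mathbb{C})$ must be commutative, and a product of rings is commutative if and only if each factor is. Since $Mat_{n\times n}(\mathbb{C})$ is non-commutative as soon as $n\geq 2$ (for instance, the standard matrix units $E_{12}$ and $E_{21}$ fail to commute), we must have $n_{i}=1$ for all surviving $i$, which gives the stated isomorphism $W\cong \mathbb{C}\times \cdots \times \mathbb{C}$.

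There is essentially no obstacle here: the heavy lifting was done in the previous theorem (the representation-theoretic decomposition of the group ring $\mathbb{C}[A\times _{C}G]$ and the analysis of the ideal $I$), and this corollary is just the concrete reformulation in terms of matrix algebras together with the trivial fact that matrix algebras are non-commutative except in size one. The only thing I would be careful about is phrasing the final equivalence correctly: the statement ``commutative iff all $n_{i}=1$'' refers only to those $V_{i}$ that appear in the decomposition of $W$ (i.e.\ those with $J_{i}=0$), not to all irreducible representations of $A\times _{C}G$.
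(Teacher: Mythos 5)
Your proposal is correct and follows exactly the route the paper intends: the corollary is stated without a separate proof precisely because it is the immediate translation of the preceding theorem's conclusion $W\cong \oplus_{i}Hom_{\mathbb{C}}(V_{i},V_{i})$ (summed over the $i$ with $J_{i}=0$) into matrix-algebra language, plus the standard fact that $Mat_{n\times n}(\mathbb{C})$ is commutative only for $n=1$. Your closing remark that the equivalence concerns only the surviving factors is a worthwhile precision that the paper leaves implicit.
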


\subsection{Associative $G$-graded algebras over $\mathbb{R}$}

For the real case we proceed in a similar manner as in the last section. For
any group $G,$ the group ring $\mathbb{R}[G]$ is isomorphic to a finite
direct sum of $\mathbb{R}$-algebras of the form $Hom_{D_{i}}(V_{i},V_{i})$
where $D_{i}$ is a division algebra over the reals. \label{remarquito}
Moreover, $D_{i}=$ $Hom_{%
\mathbb{R}
}(V_{i},V_{i})^{G}$ (see \cite{Sr}). This immediately yields the following:

\begin{theorem}
\label{clasificacion asocitivo}Let $W=\oplus _{g\in G}W_{g}$ be a $G$%
-graded, twisted, associative $\mathbb{R}$-algebra. Then $W$ is isomorphic
to a direct sum $\oplus _{i=1}^{r}Hom_{D_{i}}(V_{i},V_{i}),$ where $D_{i}$
denotes one of the division rings $%
\mathbb{R}
,\mathbb{C},$ or $\mathbb{H}$, the quaternions.
\end{theorem}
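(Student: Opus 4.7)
The proof will mirror the strategy already used over $\mathbb{C}$, with the only substantive change being to replace Schur's lemma over $\mathbb{C}$ by its real version and then invoke Frobenius.

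First, I would apply Theorem \ref{fundamental} to obtain $W \cong R/I$, where $R = \mathbb{R}[A\times_{C}G]$ and $I$ is the bilateral ideal generated by the elements $(\alpha,g)-\alpha(1,g)$. Because $W$ is associative, the loop $E_W = A\times_{C}G$ is actually a group, and since $G$ is finite and the structure constants of $C$ take finitely many values (so the subgroup $A$ they generate in $\mathbb{R}^{\ast}$ is finite), $E_W$ is a finite group. Hence $R$ is the group algebra of a finite group over $\mathbb{R}$, which is semisimple by Maschke's theorem.

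Next, I would invoke the Wedderburn decomposition of $\mathbb{R}[E_W]$ (see the reference \cite{Sr} cited just above the statement): there is an isomorphism of $\mathbb{R}$-algebras
\begin{equation*}
R \;\cong\; \bigoplus_{i=1}^{r} \operatorname{Hom}_{D_{i}}(V_{i},V_{i}),
\end{equation*}
where each $V_i$ is an irreducible real representation of $E_W$ and each $D_i = \operatorname{Hom}_{\mathbb{R}[E_W]}(V_i,V_i)$ is a finite-dimensional associative division algebra over $\mathbb{R}$. By Frobenius's theorem, $D_i$ is necessarily one of $\mathbb{R}$, $\mathbb{C}$, or $\mathbb{H}$.

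Then I would push the ideal $I$ through this isomorphism. Since $I$ is bilateral, its image $J$ is a bilateral ideal in the Wedderburn decomposition, and such ideals decompose as products $J = J_1 \times \cdots \times J_r$ with $J_i \triangleleft \operatorname{Hom}_{D_i}(V_i,V_i)$. But each factor $\operatorname{Hom}_{D_i}(V_i,V_i)$ is a simple $\mathbb{R}$-algebra, so $J_i$ is either $(0)$ or the whole factor. Therefore
\begin{equation*}
W \;\cong\; R/I \;\cong\; \bigoplus_{i\in S} \operatorname{Hom}_{D_{i}}(V_{i},V_{i}),
\end{equation*}
where $S = \{i : J_i = 0\}$, which is exactly the conclusion of the theorem (after re-indexing).

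The only step with any substance is the appeal to the real Wedderburn decomposition with endomorphism rings over the division algebras $D_i$; this is where the real case genuinely differs from the complex case, since Schur's lemma no longer forces $D_i = \mathbb{R}$. However, this is a standard fact taken from \cite{Sr} and explicitly recalled in the paragraph preceding the theorem, so I would not expect to reprove it. The main conceptual point is simply to verify that the bilateral ideal $I$ splits along the Wedderburn summands so that the quotient lands in the stated form; this is automatic from the simplicity of each factor.
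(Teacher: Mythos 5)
Your proof follows the paper's argument essentially verbatim: apply Theorem \ref{fundamental}, invoke the real Wedderburn decomposition $\mathbb{R}[A\times_{C}G]\cong\oplus_{i}Hom_{D_{i}}(V_{i},V_{i})$ with $D_{i}\in\{\mathbb{R},\mathbb{C},\mathbb{H}\}$ recalled just before the statement, and kill the bilateral ideal factorwise by simplicity of each summand --- indeed you are slightly more explicit than the paper, which leaves the simplicity step implicit and just says ``the result follows.'' The one quibble is your parenthetical claim that the subgroup $A$ generated by the structure constants is automatically finite (a finitely generated subgroup of $\mathbb{R}^{\ast}$ need not be finite, e.g.\ the one generated by $2$), but the paper itself silently assumes finiteness of $A\times_{C}G$ in order to apply finite-group representation theory, so this does not distinguish your argument from theirs.
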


\begin{proof}
We already know that $W\simeq \mathbb{R}[A\times _{C}G]/I$, for some
bilateral ideal $I.$ From the remark above (\ref{remarquito}) it follows
that 
\begin{equation*}
\mathbb{R}[A\times _{C}G]/I\simeq \oplus
_{i=1}^{n}Hom_{D_{i}}(V_{i},V_{i})/J_{i},
\end{equation*}%
for bilateral ideals $J_{i}$ of $Hom_{D_{i}}(V_{i},V_{i}),$ where $D_{i}=$ $%
Hom_{%
\mathbb{R}
}(V_{i},V_{i})^{A\times _{C}G}$ is a division (associative) algebra over the
reals. But it is a well known fact that $D_{i}$ must equal to one of the
algebras $%
\mathbb{R}
,\mathbb{C}$ or $\mathbb{H}$. Thus, the result follows.
\end{proof}

\section{Graded Morphisms and group Cohomology}

In this section we study the problem of determining when two $G$-graded
twisted $k$-algebras are isomorphic under a graded isomorphism. The
following theorem provides necessary and sufficient conditions for two
algebras to be graded-isomorphic in terms of the second group cohomology $%
H^{2}(G,k^{\ast }).$ For the basic notions about group cohomology, the
reader may consult \cite{AtiyahWall}, \cite{Baba}.

\begin{theorem}
\label{clasifica cohomology}Let $V=\oplus _{g\in G}V_{g}$ and $W=\oplus
_{g\in G}W_{g}$ two $G$-graded $k$-algebras. Let us fix bases $B_{1}$ and $%
B_{2}$ for $V$ and $W$, respectively, and let $C_{1}$ and $C_{2}$ be the
associated structure constants. Then $V$ is isomorphic to $W$ if and only if
the function $C_{1}C_{2}^{-1}$ is in the kernel of $d^{2}:C^{2}(G,k^{\ast
})\rightarrow C^{3}(G,k^{\ast })$ and the class $[C_{1}C_{2}^{-1}]$ is
trivial in $H^{2}(G,k^{\ast })$.
\end{theorem}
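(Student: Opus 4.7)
The plan is to make the parametrization of graded morphisms given in Remark~\ref{lluvia} explicit, translate the multiplicativity condition into a cochain equation, and then read off the cohomological characterization directly. By Remark~\ref{lluvia}, a graded morphism $\phi: V \to W$ corresponds uniquely to a function $\varphi: G \to k$ via $\phi(v_g) = \varphi(g) w_g$, and $\phi$ is a graded \emph{isomorphism} precisely when $\varphi$ takes values in $k^{\ast}$. Thus the problem reduces to finding $\varphi \in C^1(G, k^{\ast})$ for which the associated $\phi$ is multiplicative.

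The key step is to compute both sides of $\phi(v_a v_b) = \phi(v_a)\phi(v_b)$ using the structure constants. One finds
$\phi(v_a v_b) = C_1(a,b)\varphi(ab)\,w_{ab}$ while $\phi(v_a)\phi(v_b) = \varphi(a)\varphi(b)C_2(a,b)\,w_{ab}$. Equating the coefficients of $w_{ab}$ gives the pointwise identity
$C_1(a,b)\,C_2(a,b)^{-1} = \varphi(a)\varphi(b)\varphi(ab)^{-1}$ for all $a,b \in G$, which is exactly $C_1 C_2^{-1} = d^1\varphi$ in the standard cochain complex computing $H^{\ast}(G,k^{\ast})$. Setting $b=e$ and using $C_i(a,e)=1$ forces $\varphi(e)=1$, so the unitarity of $\phi$ is automatic.

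Hence a graded isomorphism between $V$ and $W$ exists if and only if $C_1 C_2^{-1}$ lies in the image of $d^1 : C^1(G,k^{\ast}) \to C^2(G,k^{\ast})$, i.e.\ if and only if $C_1 C_2^{-1}$ is a $2$-coboundary. A $2$-cochain is a coboundary exactly when it is a $2$-cocycle (lies in $\ker d^2$) and its class in $H^2(G,k^{\ast})$ is trivial, which is precisely the statement of the theorem.

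The entire argument is bookkeeping once the correspondence of Remark~\ref{lluvia} is in hand; there is no genuinely hard step. The one conceptual subtlety worth emphasizing is that, because $V$ and $W$ are not assumed associative, neither $C_1$ nor $C_2$ individually need be a $2$-cocycle. Consequently the condition that $C_1 C_2^{-1} \in \ker d^2$ is a genuine requirement and not automatic, which is why the theorem separately states both the kernel condition and the vanishing of the class in $H^2(G,k^{\ast})$.
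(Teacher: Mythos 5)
Your proposal is correct and follows essentially the same route as the paper: both reduce the existence of a graded isomorphism to the equation $C_1C_2^{-1}=d^1\varphi$ via the coefficient comparison in $\phi(v_av_b)=\phi(v_a)\phi(v_b)$, and then identify the image of $d^1$ with the $2$-cochains lying in $\ker d^2$ whose class in $H^2(G,k^{\ast})$ vanishes. Your added observations --- that $\varphi(e)=1$ follows from $C_i(a,e)=1$, and that the $\ker d^2$ condition is not automatic in the nonassociative setting --- are correct refinements that the paper leaves implicit.
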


\begin{proof}
Let us suppose that $V$ and $W$ are isomorphic as $k$-algebras under a
grading preserving isomorphism $\phi :V\rightarrow W$. This implies that
there exists $\varphi :G\rightarrow k^{\ast }$ which sends each vector $%
v_{g} $ into $\varphi (g)w_{g}$. But $\phi $ a homomorphism implies that $%
\phi (C_{1}(a,b)v_{a+b})=\varphi (a)w_{a}\varphi (b)w_{b},$ and
consequently, $C_{1}(a,b)\varphi (ab)w_{a+b}=\varphi (a)\varphi
(b)C_{2}(a,b)w_{a+b}.$ From this we obtain 
\begin{equation}
C_{1}(a,b)C_{2}^{-1}(a,b)=\varphi (a)\varphi (ab)^{-1}\varphi (b).
\label{delta}
\end{equation}%
Notice that $d^{1}\varphi (a,b)=\varphi (b)\varphi ^{-1}(ab)\varphi (a),$
and therefore $C_{1}C_{2}^{-1}$ belongs to the image of $d^{1}:C^{1}(G,k^{%
\ast })\rightarrow C^{2}(G,k^{\ast }).$ Thus, $d^{2}(C_{1}C_{2}^{-1})=1,$
and $[C_{1}C_{2}^{-1}]=1$ in $H^{2}(G,k^{\ast }).$

Reciprocally, if $d^{2}(C_{1}C_{2}^{-1})=1,$ and $[C_{1}C_{2}^{-1}]=1$ in $%
H^{2}(G,k^{\ast }),$ then there exists $\varphi :G\rightarrow k^{\ast }$
such that $d^{1}\varphi =C_{1}C_{2}^{-1}$ and consequently equation (\ref%
{delta}) holds. It then follows that the function $\phi :V\rightarrow W$
defined on the basis $B_{1}$ as $\phi (v_{g})=\varphi (g)w_{g}$ is a
homomorphism of $k$-algebras, which is injective, since so it is $\phi $.
But $V$ are $W$ are $k$-vector spaces of the same dimension (equal to $%
\left\vert G\right\vert )$. Hence, $\phi $ is an isomorphism.
\end{proof}

\begin{remark}
\emph{If }$V$\emph{\ and }$W$\emph{\ are associative, then }$%
d^{2}C_{1}=d^{2}C_{2}=1,$\emph{\ since both these terms are equal to the
associativity function in (\ref{rasocia}). In this case, the class of each }$%
C_{i}$\emph{\ is an element of }$H^{2}(G,k^{\ast })$\emph{\ and the
condition }$[C_{1}C_{2}^{-1}]=1$\emph{\ is equivalent to }$[C_{1}]=[C_{2}]$%
\emph{\ in }$H^{2}(G,k^{\ast })$\emph{.}
\end{remark}

\begin{corollary}
\label{idependiente}Let $W=\oplus _{g\in G}W_{g}$ be a $G$-graded $k$%
-algebra, and let be $B$ and $B^{\prime }$ be bases for $W$, with associated
constant structures $C$ and $C^{\prime }.$ Let $r$ and $r^{\prime }$ be the
corresponding functions as defined above. Then $r=r^{\prime }.$ In other
words, the associativity function of $W$ does not depend on any chosen basis.
\end{corollary}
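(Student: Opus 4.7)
The plan is to recognize that the function $r$ is simply the image of the structure constant $C$ under the group-cohomological coboundary $d^{2}:C^{2}(G,k^{\ast })\rightarrow C^{3}(G,k^{\ast })$, and then to invoke either Theorem \ref{clasifica cohomology} or the elementary complex identity $d^{2}\circ d^{1}=1$. Indeed, for any multiplicative $2$-cochain $f:G\times G\rightarrow k^{\ast }$, the coboundary is
\[
(d^{2}f)(a,b,c)=f(b,c)\,f(ab,c)^{-1}\,f(a,bc)\,f(a,b)^{-1},
\]
which is exactly the formula defining $r$ from $C$ in (\ref{rasocia}). Hence $r=d^{2}C$ and, by the same token applied to $B^{\prime }$, also $r^{\prime }=d^{2}C^{\prime }$, so the task reduces to showing $d^{2}C=d^{2}C^{\prime }$.

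The quickest route is to apply Theorem \ref{clasifica cohomology} to the identity map on $W$, viewed as a graded isomorphism from $(W,B)$ to $(W,B^{\prime })$. The forward direction of that theorem immediately gives that $CC^{\prime -1}$ lies in the kernel of $d^{2}$, which is precisely the statement $d^{2}C=d^{2}C^{\prime }$, and hence $r=r^{\prime }$.

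For a self-contained argument one can proceed directly. Since each $W_{g}$ is free of rank one, there is a unique function $\varphi :G\rightarrow k^{\ast }$ with $w_{g}^{\prime }=\varphi (g)w_{g}$. Expanding $w_{a}^{\prime }w_{b}^{\prime }$ in two ways (once through $C^{\prime }$ and once through $C$ together with the change of basis) yields
\[
C^{\prime }(a,b)=\varphi (a)\varphi (b)\varphi (ab)^{-1}C(a,b)=(d^{1}\varphi )(a,b)\cdot C(a,b),
\]
so $C$ and $C^{\prime }$ differ by the coboundary of the $1$-cochain $\varphi $. Applying $d^{2}$ and using that $d^{2}\circ d^{1}$ is identically the trivial $3$-cochain gives $r^{\prime }=d^{2}C^{\prime }=d^{2}(d^{1}\varphi )\cdot d^{2}C=r$. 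The only real obstacle is bookkeeping: one must verify that the four-factor formula (\ref{rasocia}) matches the standard multiplicative $2$-coboundary under the sign/exponent conventions fixed in the paper, and check by direct expansion that $d^{2}\circ d^{1}$ collapses to $1$ on any $1$-cochain; both are routine once the identification $r=d^{2}C$ is made.
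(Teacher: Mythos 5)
Your proposal is correct and takes essentially the same route as the paper: the paper's proof likewise applies Theorem \ref{clasifica cohomology} to the identity graded isomorphism to conclude $C'C^{-1}\in \mathrm{im}(d^{1})\subset \ker(d^{2})$, hence $r'=d^{2}C'=d^{2}C=r$. Your ``self-contained'' variant merely unwinds that theorem's forward direction (the change-of-basis identity $C'=(d^{1}\varphi)\cdot C$ together with $d^{2}\circ d^{1}=1$), so it is the same argument made explicit.
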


\begin{proof}
The identity isomorphism $I:W\rightarrow W$ is trivially graded. By the
previous theorem, $[C^{\prime }C^{-1}]=1$ and consequently $C^{\prime
}C^{-1}\in im(d^{1}).$ Hence, $d^{2}(C^{\prime }C^{-1})=1$, and therefore $%
r^{\prime }=d^{2}(C^{\prime })=d^{2}(C)=r.$
\end{proof}

Let us notice now that if $C_{1}$ and $C_{2}$ take values in a subdomain $%
A\subset k^{\ast }$, then $C_{1}C_{2}^{-1}\in C^{2}(G,A).$ Hence, if $%
d^{2}(C_{1}C_{2}^{-1})=1$, it makes sense to talk about the class $%
[C_{1}C_{2}^{-1}]\in H^{2}(G,A).$ The following theorem gives a criterion in
terms of $H^{2}(G,A)$ to determine when $V$ and $W$ are isomorphic. This may
be useful in many cases where $A$ is a finite subgroup of $k^{\ast }.$

\begin{theorem}
\label{dos}$\phi :V\rightarrow W$ is a (graded) isomorphism if and only if $%
d^{2}(C_{1}C_{2}^{-1})=1,$ and $[C_{1}C_{2}^{-1}]\in \ker (i_{2})$, where $%
i_{2}:H^{2}(G,A)\rightarrow H^{2}(G,k^{\ast })$ denotes the homomorphism in
cohomology induced by the inclusion $i:A\rightarrow k^{\ast }$.
\end{theorem}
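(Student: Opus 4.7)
The plan is to deduce this directly from Theorem \ref{clasifica cohomology} by keeping track of where the cochains live. Since $C_{1}$ and $C_{2}$ take values in the subdomain $A$, so does the quotient $C_{1}C_{2}^{-1}$, so it defines an element of $C^{2}(G,A)$. The coboundary $d^{2}$ on $C^{*}(G,A)$ is simply the restriction of $d^{2}$ on $C^{*}(G,k^{\ast })$ (both are given by the same multiplicative alternating-sum formula), and since the inclusion $i:A\hookrightarrow k^{\ast }$ is injective, $d^{2}(C_{1}C_{2}^{-1})=1$ in $C^{3}(G,A)$ if and only if $d^{2}(C_{1}C_{2}^{-1})=1$ in $C^{3}(G,k^{\ast })$. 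This takes care of the cocycle condition.

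Assuming the cocycle condition holds, we obtain a cohomology class $[C_{1}C_{2}^{-1}]\in H^{2}(G,A)$. The induced map $i_{2}:H^{2}(G,A)\rightarrow H^{2}(G,k^{\ast })$ sends this class to the class of the same cocycle viewed in $C^{2}(G,k^{\ast })$. Therefore the condition $[C_{1}C_{2}^{-1}]=1$ in $H^{2}(G,k^{\ast })$ translates exactly to $i_{2}[C_{1}C_{2}^{-1}]=1$, that is, $[C_{1}C_{2}^{-1}]\in \ker (i_{2})$.

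Combining these two observations, the hypotheses of Theorem \ref{dos} are just a rewriting of the hypotheses of Theorem \ref{clasifica cohomology}, so the equivalence with the existence of a graded isomorphism $\phi :V\rightarrow W$ is immediate.

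There is no real obstacle; the only subtlety worth flagging is that one must resist the temptation to strengthen the condition to $[C_{1}C_{2}^{-1}]=1$ in $H^{2}(G,A)$. The kernel $\ker (i_{2})$ can genuinely be nontrivial, since a cocycle with values in $A$ may fail to be a coboundary by a $1$-cochain taking values in $A$ while still being a coboundary by a $1$-cochain taking values in the larger group $k^{\ast }$; such a splitting cochain is exactly the $\varphi :G\rightarrow k^{\ast }$ needed to build the graded isomorphism in the proof of Theorem \ref{clasifica cohomology}, and there is no reason its values should lie in $A$.
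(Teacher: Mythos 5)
Your proof is correct and follows essentially the same route as the paper: both arguments reduce the statement to Theorem \ref{clasifica cohomology} and then observe that, since $i_{2}$ sends the class of $C_{1}C_{2}^{-1}$ in $H^{2}(G,A)$ to the class of the same cocycle in $H^{2}(G,k^{\ast })$, triviality of the latter is exactly membership in $\ker (i_{2})$. The only cosmetic difference is that the paper packages this via the long exact sequence in cohomology associated to $1\rightarrow A\rightarrow k^{\ast }\rightarrow k^{\ast }/A\rightarrow 1$ (which also identifies $\ker (i_{2})$ with the image of the connecting map $\delta$, matching your closing remark about why this kernel can be nontrivial), whereas you unwind the definition of the induced map directly.
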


\begin{proof}
The short exact sequence of groups%
\begin{equation*}
1\rightarrow A\overset{i}{\rightarrow }k\overset{\pi }{\rightarrow }k^{\ast
}/A\rightarrow 1.
\end{equation*}%
induces an exact sequence of complexes 
\begin{equation}
1\rightarrow C^{\bullet }(G,A)\overset{i_{\bullet }}{\rightarrow }%
C^{^{\bullet }}(G,k^{\ast })\overset{\pi _{\bullet }}{\rightarrow }%
C^{^{\bullet }}(G,k^{\ast }/A)\rightarrow 1,  \tag{(*)}
\end{equation}%
where we may identify the quotient $C^{^{\bullet }}(G,k^{\ast })/C^{\bullet
}(G,A)$ with $C^{^{\bullet }}(G,k^{\ast }/A)$ via the isomorphism that sends
the class of $h:G\rightarrow k^{\ast }$ into $\pi \circ h.$ By Theorem \ref%
{clasifica cohomology}, $V$ and $W$ are graded isomorphic, if and only if $%
d^{2}(C_{1}C_{2}^{-1})=1,$ and $[C_{1}C_{2}^{-1}]=1$ in $H^{2}(G,k^{\ast }).$
But looking at the long exact sequence for cohomology 
\begin{equation*}
\cdots \rightarrow H^{1}(G,k^{\ast })\overset{\pi _{1}}{\rightarrow }%
H^{1}(G,k^{\ast }/A)\overset{\delta }{\rightarrow }H^{2}(G,A)\overset{i_{2}}{%
\rightarrow }H^{2}(G,k^{\ast })\rightarrow \cdots
\end{equation*}%
we see that this occurs precisely when $i_{2}([C_{1}C_{2}^{-1}])=1.$
\end{proof}

\begin{example}
\emph{If }$G$\emph{\ denotes a cyclic group of order }$n,$\emph{\ then it is
well know that }$H^{2}(G,C^{\ast })=%
\mathbb{C}
^{\ast }/(%
\mathbb{C}
^{\ast })^{n}=\{1\}$\emph{. Hence, if }$V$\emph{\ and }$W$\emph{\ are }$G$%
\emph{-graded associative algebras with structure constants given by }$%
C_{1},C_{2}:G\times G\rightarrow A\subset C^{\ast }$\emph{, then }$%
[C_{1}][C_{2}]^{-1}=1$\emph{\ and consequently they are isomorphic. It
readily follows that }$C[t]/(t^{n}-1)=\oplus _{r=0}^{n-1}Ct^{r}$\emph{\ is a
representative of the unique isomorphism class.}
\end{example}

\begin{remark}
\emph{If }$k=R$\emph{, then }$H^{2}(G,R^{\ast })=\{1\},$\emph{\ if }$n$\emph{%
\ is odd, and it is equal to }$\{1,-1\},$\emph{\ if }$n$\emph{\ is even. In
the first case, there exists a unique real associative algebra }$%
R[t]/(t^{n}-1)$\emph{. In the second case, there are exactly two algebras,
given by }$R[t]/(t^{n}-1),$\emph{\ and }$R[t]/(t^{n}+1).$\emph{\ On the
other hand, if }$V$\emph{\ and }$W$\emph{\ have structure constants }$%
C_{1},C_{2}:G\times G\rightarrow A$\emph{, where }$\left\vert A\right\vert $%
\emph{\ and }$\left\vert G\right\vert $\emph{\ are relatively prime
integers, then }$H^{2}(G,A)=\{1\}$\emph{. From the previous theorem, if }$%
d^{2}(C_{1}C_{2}^{-1})=1$\emph{\ then }$V$\emph{\ and }$W$\emph{\ are
isomorphic as graded algebras. In the general case }$%
d^{2}(C_{1}C_{2}^{-1})=d^{2}(C_{1})d^{2}(C_{2})^{-1}=r_{1}r_{2}^{-1}$\emph{,
where }$r_{i}$\emph{\ is the associativity function of }$C_{i}.$\emph{\
Thus, }$r_{1}=r_{2}$\emph{\ if and only if }$V$\emph{\ and }$W$\emph{\ are
isomorphic.}
\end{remark}

This proves the following.

\begin{theorem}
Let $W^{1}=\oplus _{g\in G}W_{g}^{1},$ and $W^{2}=\oplus _{g\in G}W_{g}^{2},$
be $G$-graded $k$-algebras over a finite group $G$. Let $r_{1,}r_{2}:G^{3}%
\rightarrow A$ be the corresponding associativity functions. If $\left\vert
A\right\vert $ and $\left\vert G\right\vert $ are relatively prime integers,
then $W^{1}$ and $W^{2}$ are isomorphic as graded algebras if and only if $%
r_{1}=r_{2}.$ In particular, if $W^{1}$ and $W^{2}$ are associative, and if $%
\left\vert A\right\vert $ and $\left\vert G\right\vert $ are relatively
prime integers, then $W^{1}$ and $W^{2}$ are isomorphic as graded algebras.
\end{theorem}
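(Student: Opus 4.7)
The plan is to deduce the theorem directly from Theorem~\ref{dos}, combined with a standard vanishing result in group cohomology. First, I would apply Theorem~\ref{dos} to $W^{1}$ and $W^{2}$, so that the existence of a graded isomorphism is equivalent to the conjunction of two conditions: (i) $d^{2}(C_{1}C_{2}^{-1})=1$, and (ii) $[C_{1}C_{2}^{-1}]\in\ker(i_{2})$, where $i_{2}\colon H^{2}(G,A)\to H^{2}(G,k^{*})$ is induced by the inclusion $A\hookrightarrow k^{*}$.

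Next, I would unpack (i). Since $d^{2}$ is a group homomorphism from the multiplicative group $C^{2}(G,k^{*})$ to $C^{3}(G,k^{*})$, and $r_{i}=d^{2}(C_{i})$ by the definition in~(\ref{rasocia}), I obtain
\[
d^{2}(C_{1}C_{2}^{-1}) \;=\; d^{2}(C_{1})\,d^{2}(C_{2})^{-1} \;=\; r_{1}r_{2}^{-1},
\]
so condition (i) is equivalent to the identity $r_{1}=r_{2}$.

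The crux of the argument lies in disposing of (ii). For this I would invoke the classical fact that, for any finite group $G$ and any abelian group $M$, the integer $|G|$ annihilates $H^{n}(G,M)$ for every $n\ge 1$. Applied to $M=A$, which is itself annihilated by $|A|$ since $A$ is a finite subgroup of $k^{*}$, we conclude that $H^{2}(G,A)$ is annihilated by both $|G|$ and $|A|$. When these integers are coprime, B\'ezout's identity forces $H^{2}(G,A)=\{1\}$, so $\ker(i_{2})$ is the whole group and condition (ii) is automatic as soon as (i) holds. Combining the two steps, $W^{1}\cong W^{2}$ as graded algebras if and only if $r_{1}=r_{2}$.

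The final ``in particular'' assertion is then immediate: if $W^{1}$ and $W^{2}$ are both associative, both associativity functions equal the constant $1$, so $r_{1}=r_{2}$ holds trivially and the coprimality hypothesis alone yields a graded isomorphism. I do not foresee any serious obstacle beyond recalling the annihilation theorem for $H^{n}(G,A)$ and verifying that a finite subgroup $A\subset k^{*}$ is $|A|$-torsion; both are standard.
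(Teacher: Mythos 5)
Your proof is correct and follows essentially the same route as the paper: the paper's ``proof'' is the remark immediately preceding the theorem, which likewise invokes Theorem~\ref{dos}, asserts that $H^{2}(G,A)=\{1\}$ when $\left\vert A\right\vert$ and $\left\vert G\right\vert$ are coprime, and identifies $d^{2}(C_{1}C_{2}^{-1})$ with $r_{1}r_{2}^{-1}$. The only difference is that you spell out the standard annihilation/B\'ezout justification for the vanishing of $H^{2}(G,A)$, which the paper takes for granted.
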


\section{Some classification results in the nonassociative case}

In this section we shall give a complete classification under graded
isomorphisms of all $G$-graded twisted algebras, when $G$ is a finite cyclic
group, and under the condition that their associative function satisfies the
following symmetric condition: $r(a,b,c)=r(b,a,c),$ for all $a,b,c\in G$ %
\label{simetria}.

We start with a general discussion. Let $G$ be any abelian group $G$, and
let $W=\oplus _{g\in G}W_{g}$ be a $G$-graded twisted $k$-algebra. Let us
fix a basis $B=\{v_{g}:g\in G\}$, and denote by $T_{g}:W\rightarrow W$ the $%
k $-linear map defined by multiplying (on the left) by $v_{g}$. If $a$ and $b
$ are arbitrary elements of $G$, then for any $v_{g}\in B$ 
\begin{eqnarray*}
T_{a}(T_{b}(v_{g})) &=&T_{a}(v_{b}v_{g}) \\
&=&v_{a}(v_{b}v_{g}) \\
&=&r(a,b,g)(v_{a}v_{b})v_{g} \\
&=&r(a,b,g)C(a,b)v_{ab}v_{g} \\
&=&r(a,b,g)C(a,b)T_{ab}(v_{g}),
\end{eqnarray*}%
and consequently, 
\begin{equation}
T_{ab}(v_{g})=r(a,b,g)^{-1}C(a,b)^{-1}T_{a}(T_{b}(v_{g})).  \label{prim}
\end{equation}%
Similarly, 
\begin{equation}
T_{ba}(v_{g})=r(b,a,g)^{-1}C(b,a)^{-1}T_{b}(T_{a}(v_{g})).  \label{sec}
\end{equation}%
Since $G$ is abelian, $T_{ab}=T_{ba}$. Therefore, 
\begin{equation*}
T_{a}(T_{b}(v_{g}))=r(a,b,g)C(a,b)r(b,a,g)^{-1}C(b,a)^{-1}T_{b}(T_{a}(v_{g})).
\end{equation*}%
Using the symmetry condition (\ref{simetria}) $r(a,b,g)=r(b,a,g)$, it
follows that 
\begin{equation*}
T_{a}(T_{b}(v_{g}))=q(a,b)T_{b}(T_{a}(v_{g})).
\end{equation*}%
Hence, for any $x\in W$ (not necessarily homogeneous)%
\begin{equation}
T_{a}(T_{b}(x))=q(a,b)T_{b}(T_{a}(x))  \label{maravilla}
\end{equation}%
In a similar way,%
\begin{equation}
T_{b}(T_{a}(x))=q(b,a)T_{a}(T_{b}(x))\text{.}  \label{maravilla1}
\end{equation}%
Let $G$ be a cyclic group of order $n$. Fix any generator $a$ of $G.$ Let $%
w_{a}$ be any non zero element in $W_{a}.$ Define inductively $%
w_{a^{k}}=w_{a}\cdot w_{a^{k-1}},$ where $w_{a^{0}}=\alpha $, for some
arbitrary fixed $\alpha \neq 0$ in $\mathbb{C}.$ Define $v_{a^{k}}=\beta
^{k}w_{a^{k}}$, where $\beta $ denotes any primitive $n$-th root of unity of 
$\alpha ^{-1}$, ($\beta ^{n}=1/\alpha $)$.$ If $k=n,$ we see that $%
v_{a^{0}}=\beta ^{n}w_{a^{0}}=\beta ^{n}\alpha =1,$ and

\begin{equation*}
v_{a}v_{a^{k-1}}=\beta ^{k}w_{a}w_{a^{k-1}}=\beta ^{k}w_{a^{k}}=v_{a^{k}}.
\end{equation*}%
The basis $B=\{1,v_{a},\ldots ,v_{a^{n-1}}\}$ will be called the \textit{%
standard basis} for $W.$ In what follows, the structure constant $%
C_{B}(a^{r},a^{k})$\emph{\ }will be denoted by\emph{\ }$C(a^{r},a^{k}).$
Notice that $C(a,a^{r})=1$, for all $r=0,\ldots n-1$.

Since $T_{a}$ sends $v_{a^{k}}$ into $v_{a^{k+1}},$ the linear map $T_{a}$
must permute the basis $B$ cyclically, and consequently the minimal
polynomial for $T_{a}$ must be $Y^{n}-1=0$. Hence, the eigenvalues of $T_{a}$
are precisely the $n$-th complex roots of unity $\{\omega _{1},\ldots
,\omega _{n}\}$. For each $1\leq $ $j\leq n,$ let $z_{j}=\sum_{k=0}^{n-1}%
\omega _{j}^{k}v_{a^{k}}.$

If $t\equiv s$ mod $n$, then $\omega _{j}^{t}v_{a^{t}}=\omega
_{j}^{s}{}v_{a^{s}}$, since $t=s+nq$ implies 
\begin{equation*}
\omega _{j}^{t}v_{a^{t}}=\omega _{j}^{nq}\omega _{j}^{s}v_{a^{s}}=1\cdot
\omega _{j}^{s}v_{a^{s}}.
\end{equation*}%
Consequently, the sum $\sum_{k=0}^{n-1}\omega _{j}^{k}v_{a^{k}}$ can be also
be written as $\sum_{k\in \mathbb{Z}_{n}}\omega _{j}^{k}v_{a^{k}}.$ Now, $%
\omega _{j}{}^{-1}$ is an eigenvalue associated to $z_{j}$ because 
\begin{eqnarray*}
T_{a}(z_{j}) &=&\sum_{k\in \mathbb{Z}_{n}}\omega
_{j}^{k}v_{a}v_{a^{k}}=\omega _{j}{}^{-1}\sum_{k+1\in \mathbb{Z}_{n}}\omega
_{j}{}^{k+1}v_{a^{k+1}} \\
&=&\omega _{j}{}^{-1}\sum_{s\in \mathbb{Z}_{n}}^{n}\omega
_{j}^{s}v_{a^{s}}=\omega _{j}{}^{-1}z_{j}.
\end{eqnarray*}%
On the other hand, by (\ref{maravilla1})

\begin{eqnarray*}
T_{b}(T_{a}(z_{j})) &=&q(b,a)T_{a}(T_{b}(z_{j})) \\
T_{b}(\omega _{j}{}^{-1}z_{j}) &=&q(b,a)T_{a}(T_{b}(z_{j})) \\
\omega _{j}{}^{-1}T_{b}(z_{j}) &=&q(b,a)T_{a}(T_{b}(z_{j})) \\
q(a,b)\omega _{j}{}^{-1}T_{b}(z_{j}) &=&T_{a}(T_{b}(z_{j})).
\end{eqnarray*}%
Hence, $T_{b}(z_{j})$ is a eigenvector of the eigenvalue $q(a,b)\omega
_{j}{}^{-1}$. Since $T_{a}$ has $n$ different eigenvalues, there must exist $%
\omega _{i}$ and $z_{i}$ such that $\omega _{i}{}^{-1}=q(a,b)\omega
_{j}{}^{-1}$ and $T_{b}(z_{j})=\eta _{b,j}z_{i}$, for some $\eta _{b,j}\neq
0 $ en $\mathbb{C}.$

If we take $b=a^{r}$ we obtain%
\begin{eqnarray}
T_{a^{r}}(z_{j}) &=&T_{a^{r}}(\tsum_{k\in \mathbb{Z}_{n}}\omega
_{j}^{k}v_{a^{k}})=\tsum_{k\in \mathbb{Z}_{n}}\omega
_{j}^{k}C(a^{r},a^{k})v_{a^{r+k}}  \label{L1} \\
&=&\omega _{j}^{-r}\tsum_{k+r\in \mathbb{Z}_{n}}^{n-1}\omega
_{j}^{k+r}C(a^{r},a^{k})v_{a^{r+k}}  \label{LL1} \\
&=&\omega _{j}^{-r}\tsum_{s\in \mathbb{Z}_{n}}\omega
_{j}^{s}C(a^{r},a^{s-r})v_{a^{s}},  \label{LL4}
\end{eqnarray}%
($s=k+r$). On the other hand,%
\begin{eqnarray}
T_{a^{r}}(z_{j}) &=&\eta _{a^{r},j}z_{i}=\eta _{a^{r},j}\sum_{s\in \mathbb{Z}%
_{n}}\omega _{i}^{s}v_{a^{s}}  \label{LL33} \\
&=&\sum_{s\in \mathbb{Z}_{n}}\eta _{a^{r},j}q(a^{r},a)^{s}\omega
_{j}^{s}v_{a^{s}}.
\end{eqnarray}%
Comparing coefficients in (\ref{LL4}) and (\ref{LL33}) we see that 
\begin{equation}
\omega _{j}^{s-r}C(a^{r},a^{s-r})=\eta _{a^{r},j}q(a^{r},a)^{s}\omega
_{j}^{s}  \notag
\end{equation}%
and consequently%
\begin{equation}
C(a^{r},a^{s-r})=\eta _{a^{r},j}q(a^{r},a)^{s}\omega _{j}{}^{r}.
\label{primera1}
\end{equation}

In particular, if $s=r,$ we see that $1=C(a^{r},a^{0})=\eta
_{a^{r},j}q(a^{r},a)^{r}\omega _{j}{}^{r}.$ Thus, $\eta
_{a^{r},j}=q(a,a^{r})^{r}\omega _{j}{}^{-r}.$ Substituting in (\ref{primera1}%
) we get $C(a^{r},a^{s-r})=q(a,a^{r})^{r}\omega
_{j}^{-r}q(a^{r},a)^{s}\omega _{j}^{r}$, and consequently $%
C(a^{r},a^{s-r})=q(a^{r},a)^{s-r}.$

If we let $k=s-r$ we see that%
\begin{equation*}
C(a^{r},a^{k})=q(a^{r},a)^{k}=C(a^{r},a)^{k}C(a,a^{r})^{-k}=C(a^{r},a)^{k},
\end{equation*}%
(using that $C(a,a^{r})=1$). Thus, 
\begin{eqnarray}
r(a^{r},a^{s},a^{t})
&=&C(a^{s},a)^{t}C(a^{r+s},a)^{-t}C(a^{r},a)^{s+t}C(a^{r},a)^{-s}
\label{super} \\
&=&(C(a^{s},a)C(a^{r},a)C(a^{r+s},a)^{-1})^{t}  \label{super2}
\end{eqnarray}%
Let $G$ be a cyclic group of order $n$ and let $W=\oplus _{g\in G}W_{g},$ a $%
G$-graded twisted over $\mathbb{C}.$ Let us assume that $r$ is symmetric in
the first two entries. If $B$ denotes the standard basis for $W,$ then $W$
is completely determined by the function $f:G\rightarrow A,$ given by $%
f(a^{r})=C(a^{r},a),$ where $a$ is any fixed generator of $G.$ Hence, there
are precisely $\left\vert A\right\vert ^{n-2}$ non isomorphic $G$-graded
twisted algebras.

\begin{example}
\emph{Let }$Z_{4}=\left\{ a,a^{2},a^{3},a^{4}=1\right\} $\emph{\ and }$%
W=\oplus _{r=0}^{3}W_{a^{r}},$\emph{\ a }$C$\emph{-algebra. Let }$A=\left\{
1,-1\right\} .$\emph{\ The minimal polynomial for }$T_{a}$\emph{\ is }$%
Y^{4}-1=0,$\emph{\ with eigenvalues }$\omega _{1}=-i,\omega _{2}=-1,\omega
_{3}=i,\omega _{4}=1$\emph{. It is easy to see that the corresponding
eigenvectors are }%
\begin{eqnarray*}
z_{1} &=&1+iv-v_{2}-iv^{3} \\
z_{2} &=&1-v+v^{2}-v^{3} \\
z_{3} &=&1-iv-v^{2}+iv^{3} \\
z_{4} &=&1+v+v^{2}+v^{3}.
\end{eqnarray*}%
\emph{Hence,}%
\begin{eqnarray*}
T_{a}(z_{1}) &=&-iz_{1} \\
T_{a}(z_{2}) &=&-z_{2} \\
T_{a}(z_{3}) &=&iz_{3} \\
T_{a}(z_{4}) &=&z_{4}
\end{eqnarray*}%
\emph{Case }$1:$\emph{\ Suppose }$C(a^{2},a)=-1$\emph{\ and }$C(a^{3},a)=1$%
\emph{. Therefore }%
\begin{eqnarray*}
T_{a}(T_{a^{2}}(z_{1})) &=&-T_{a^{2}}(T_{a}(z_{1})) \\
T_{a}(T_{a^{2}}(z_{1})) &=&iT_{a^{2}}(z_{1})
\end{eqnarray*}%
\emph{Then, }$T_{a^{2}}(z_{1})$\emph{\ is an eigenvector associated to the
eigenvalue }$i$\emph{; that is, }$T_{a^{2}}(z_{1})=\beta (a^{2},1)z_{3}.$%
\emph{\ Hence, }%
\begin{equation*}
\beta (a^{2},1)=q(a,a^{2})^{2}\omega _{1}^{-2}=(-1)^{2}(i)^{-2}=-1
\end{equation*}%
\emph{and the structure constants are given by }%
\begin{eqnarray*}
C(a^{2},a^{2}) &=&q(a^{2},a)^{2}=1 \\
C(a^{2},a^{3}) &=&q(a^{2},a)^{3}=-1 \\
C(a^{2},a) &=&q(a^{2},a)=-1
\end{eqnarray*}%
\emph{Similarly, }$T_{a^{2}}(z_{2})$\emph{\ is an eigenvector of the
eigenvalue }$1,$\emph{\ i.e., }$T_{a^{2}}(z_{2})=\beta (a^{2},2)z_{4},$\emph{%
\ with }$\beta (a^{2},2)=1$\emph{. And }$T_{a^{2}}(z_{3})$\emph{\ is an
eigenvector of the eigenvalue }$-i,$\emph{\ i.e., }$T_{a^{2}}(z_{3})=\beta
(a^{2},3)z_{1},$\emph{with }$\beta (a^{2},3)=-1$\emph{. Finally, }$%
T_{a^{2}}(z_{4})$\emph{\ is an eigenvector of the eigenvalue }$-1:$\emph{\ }$%
T_{a^{2}}(z_{4})=\beta (a^{2},4)z_{2},$\emph{\ with }$\beta (a^{2},4)=1$%
\emph{. Now let }$b=a^{3}$\emph{; since }$q(a,a^{3})=1,$\emph{\ then }%
\begin{eqnarray*}
T_{a}(T_{a^{3}}(z_{1})) &=&T_{a^{3}}(T_{a}(z_{1})) \\
T_{a}(T_{a^{3}}(z_{1})) &=&-iT_{a^{3}}(z_{1})
\end{eqnarray*}%
\emph{Then }$T_{a^{3}}(z_{1})$\emph{\ is an eigenvector associated to }$-i,$%
\emph{\ i.e., }$T_{a^{3}}(z_{1})=\beta (a^{3},1)z_{1}.$ \emph{Thus, }%
\begin{equation*}
\beta (a^{3},1)=q(a,a^{3})^{3}\omega _{1}^{-3}=(1)^{3}(i)^{-3}=i,
\end{equation*}%
\emph{and the structure constants are given by }%
\begin{eqnarray*}
C(a^{3},a) &=&q(a^{3},a)=1 \\
C(a^{3},a^{2}) &=&q(a^{3},a)^{2}=1 \\
C(a^{3},a^{3}) &=&q(a^{3},a)^{3}=1.
\end{eqnarray*}%
\emph{Similarly, }$T_{a^{3}}(z_{2})=\beta (a^{3},2)z_{2},$\emph{\ with }$%
\beta (a^{3},2)=-1$\emph{; }$T_{a^{3}}(z_{3})=\beta (a^{3},3)z_{3},$\emph{\
with }$\beta (a^{3},3)=-i$\emph{. And }$T_{a^{3}}(z_{4})=\beta
(a^{3},4)z_{3},$\emph{\ with }$\beta (a^{3},4)=1.$

\emph{In the same way, the remaining three cases are analyzed: Case 2, when }%
$q(a,a^{2})=-1$\emph{\ and }$q(a,a^{3})=-1.$ \emph{Case 3, when }$%
q(a,a^{2})=1$\emph{\ and }$q(a,a^{3})=1.$ \emph{Case 4: }$q(a,a^{2})=1$\emph{%
\ and }$q(a,a^{3})=-1.$

\emph{Consequently, there are }$\left\vert A\right\vert ^{n-2}=(2)^{2}$\emph{%
\ graded twisted }$C$\emph{-algebras over }$%
\mathbb{Z}
_{4}.$
\end{example}

Putting all together we obtain

\begin{theorem}
\label{complicado2}Let $W_{1},W_{2}$ be $%
\mathbb{Z}
_{n}$-graded algebras over $k=%
\mathbb{C}
$ or $%
\mathbb{R}
$ with structure constants $C_{1}$ and $C_{2}$ corresponding to the standard
basis $B_{1}$ and $B_{2}$, respectively, and associativity function $r$
which is symmetric in the first two components. Then $W_{1}\cong W_{2}$ as
graded $k$-algebras if and only if $C_{1}(a^{r},a)=C_{2}(a^{r},a),$ for all $%
1\leq r\leq n.$
\end{theorem}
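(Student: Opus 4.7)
The plan is to read the theorem as a direct consequence of the formula $C(a^r,a^k) = C(a^r,a)^k$ derived just before the theorem, combined with Theorem~\ref{clasifica cohomology}. The key observation is that in the standard basis the defining relation $v_{a^k} = v_a v_{a^{k-1}}$ forces $C(a,a^k) = 1$ for every $k$, and the symmetric-associativity computation leading to (\ref{super2}) shows that all of the constants $C(a^r,a^k)$ are determined by the single row $\{C(a^r,a)\}_{r}$.

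For the direction $(\Leftarrow)$, I would suppose $C_1(a^r,a) = C_2(a^r,a)$ for all $r$. Using the formula $C_i(a^r,a^k) = C_i(a^r,a)^k$ (valid in the standard basis by the discussion preceding the theorem), I would conclude $C_1 \equiv C_2$ as functions on $G \times G$. The $k$-linear map $\phi\colon W_1 \to W_2$ sending $v^1_{a^r} \mapsto v^2_{a^r}$ then preserves products and the identity, and is by construction graded and bijective, hence a graded isomorphism.

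For the direction $(\Rightarrow)$, I would invoke Theorem~\ref{clasifica cohomology}: a graded isomorphism $\phi\colon W_1 \to W_2$ gives a function $\varphi\colon G \to k^{\ast}$ such that
\begin{equation*}
C_1(a,b) = \varphi(a)\varphi(b)\varphi(ab)^{-1} C_2(a,b)
\end{equation*}
for all $a,b \in G$. Plugging in $(a, a^{r-1})$ and using that $C_1(a,a^{r-1}) = C_2(a,a^{r-1}) = 1$ in the standard bases, I would obtain $\varphi(a^r) = \varphi(a)\varphi(a^{r-1})$, so inductively $\varphi(a^r) = \varphi(a)^r$. Now applying the identity to $(a^r, a)$ and using again that $C_1(a^r,a^{-1}\cdot a^{r+1}) = \varphi(a^r)\varphi(a)\varphi(a^{r+1})^{-1} C_2(a^r,a)$ together with $\varphi(a^{r+1}) = \varphi(a)\varphi(a^r)$ collapses the coboundary factor to $1$, yielding $C_1(a^r,a) = C_2(a^r,a)$.

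There is no serious obstacle here; the only point that must be handled carefully is that the hypothesis about the standard basis (namely $C(a,a^k) = 1$) is what allows one to recover $\varphi$ multiplicatively on the cyclic generator and then kill the coboundary factor in the row $(a^r, a)$. Without the standard-basis normalization the cohomology class $[C_1 C_2^{-1}]$ could be trivialized by a $\varphi$ that does not satisfy $\varphi(a^r) = \varphi(a)^r$, and the one-row criterion would fail.
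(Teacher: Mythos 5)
Your proof is correct, and it takes a genuinely different route from the paper's. The paper argues entirely at the level of the associativity function: it invokes Theorem \ref{clasifica cohomology} to reduce graded isomorphism to the condition $d^{2}(C_{1}C_{2}^{-1})=1$, i.e.\ $r_{1}=r_{2}$, then uses formula (\ref{super2}) to rewrite $r_{i}(a^{r},a^{s},a)$ as $d^{1}f_{i}(a^{r},a^{s})$ with $f_{i}(a^{r})=C_{i}(a^{r},a)$, identifies $\ker (d^{1})$ with the group of homomorphisms $G\rightarrow A$, and finally kills the resulting homomorphism $h=f_{1}f_{2}^{-1}$ by evaluating at $r=1$, where $C_{i}(a,a)=1$. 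You instead work directly with the coboundary identity (\ref{delta}): for the forward direction the normalization $C_{i}(a,a^{k})=1$ forces $\varphi (a^{r})=\varphi (a)^{r}$, which collapses the coboundary factor on the row $(a^{r},a)$ --- note that this half of your argument does not even use the symmetry hypothesis on $r$; for the converse you use the determination formula $C_{i}(a^{r},a^{k})=C_{i}(a^{r},a)^{k}$ to conclude $C_{1}\equiv C_{2}$ literally, so the basis-to-basis map is an isomorphism. Your converse is in fact more robust than the paper's: the paper passes back through Theorem \ref{clasifica cohomology} using only $d^{2}(C_{1}C_{2}^{-1})=1$, which tacitly assumes the class $[C_{1}C_{2}^{-1}]$ is automatically trivial in $H^{2}(G,k^{\ast })$ --- true for $k=\mathbb{C}$ and $G$ cyclic, but false for $k=\mathbb{R}$ with $n$ even --- whereas exhibiting $C_{1}\equiv C_{2}$ outright requires no cohomological vanishing. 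The one caveat, which applies equally to both proofs, is that the formula $C(a^{r},a^{k})=C(a^{r},a)^{k}$ (equivalently (\ref{super2})) is derived in the text only for the complex standard basis; in the real standard basis with $n$ even and $\alpha <0$ it acquires signs, so the real case of the statement really rests on the modified formulas of the subsection that follows.
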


\begin{proof}
By a previous theorem we know that $W_{1}\cong W_{2}$ if and only if $%
d^{2}(C_{1}C_{2}^{-1})=1$; Or equivalently, if and only if $%
d^{2}C_{1}=d^{2}C_{2}$. That is, if and only if, $r_{1}=r_{2}$. Now, if $%
r_{1}=r_{2},$ then equation (\ref{super2}) is true for all $t,$ in
particular if $t=1$. Now, define $f_{i}:G\rightarrow A$ by $%
f_{i}(a^{r})=C_{i}(a^{r},a)$. Applying $d^{1}$ we obtain%
\begin{equation*}
d^{1}f_{i}(a^{r},a^{s})=f_{i}(a^{s})f_{i}(a^{r+s})^{-1}f_{i}(a^{r}).
\end{equation*}%
Hence, $r_{1}=r_{2}$ if and only if $d^{1}f_{1}=d^{1}f_{2};$ Or
equivalently, iff $(f_{1}f_{2}^{-1})\in Ker(d^{1}).$ But 
\begin{eqnarray*}
Ker(d^{1}) &=&H^{1}(G,A)=\{h:G\rightarrow A:h(a)h(b)h(ab)^{-1}=1\} \\
&=&\{h:G\rightarrow A:h\text{ is a group homomorphism}\}.
\end{eqnarray*}%
Therefore, $f_{1}=f_{2}h$ for some group homomorphism $h$. This implies that 
\begin{equation*}
C_{1}(a^{r},a)=h(a)C_{2}(a^{r},a),\text{ for all }1\leq r\leq n-1.
\end{equation*}%
If $r=1,$ we see that $1=C_{1}(a,a)=h(a)C_{2}(a,a),$ and consequently $%
h(a)=1 $. Summarizing, we obtain $C_{1}(a^{r},a)=C_{2}(a^{r},a),$ for $1\leq
r\leq n-1.$
\end{proof}

\subsection{Classification in the real case}

With notation as above: Let $W=\oplus _{g\in G}W_{g}$ be a $G$-graded
twisted $\mathbb{R}$- algebra over a cyclic group $G$ of order $n.$ Let us
fix $a\in G$ a generator for this group, and let $w_{a}$ be a nonzero
element in $W_{a}.$ For $1\leq k\leq n-1,$ we define inductively, $%
w_{a^{k}}=w_{a}\cdot w_{a^{k-1}},$ with $w_{a^{0}}=\alpha $, for some fixed $%
\alpha \neq 0$ in $\mathbb{R}.$ Let us distinguish two cases:

\begin{enumerate}
\item $n$ is odd, or $n$ is even and $\alpha >0$. In this case, we define $%
v_{a^{k}}=\beta ^{k}w_{a^{k}}$ for each $0<k\leq n,$ where $\beta $ is any $%
n $-root of $\alpha ^{-1}$. Clearly, if $k=n,$ then $v_{a^{0}}=\beta
^{n}w_{a^{0}}=\beta ^{n}\alpha =1.$ Moreover, $v_{a}v_{a^{k-1}}=\beta
^{k}w_{a}w_{a^{k-1}}=\beta ^{k}w_{a^{k}}=v_{a^{k}}.$

\item $n$ is even and $\alpha <0.$ Define $v_{a^{0}}=1,$ and for $0<k<n,$
define $v_{a^{k}}=\beta ^{k}w_{a^{k}}$, where $\beta ^{n}=-1/\alpha .$
Hence, for all $k<n$ 
\begin{equation*}
v_{a}v_{a^{k-1}}=\beta ^{k}w_{a}w_{a^{k-1}}=\beta ^{k}w_{a^{k}}=v_{a^{k}}.
\end{equation*}%
And for $k=n$, it holds that $v_{a}v_{a^{n-1}}=\beta
^{n}w_{a}w_{a^{n-1}}=-1/\alpha \cdot \alpha =-1.$
\end{enumerate}

In any of the previous two cases, we will call $B=\{1,v_{a},\ldots
,v_{a^{n-1}}\}$ the \emph{standard basis} for $W.$ Again, for brevity, we
will omit the subindex $B$ in the notation of the structure constant
corresponding to this basis.

Now we notice that in the first case $C(a,a^{r})=1$, for all $r;$ in the
second case, $C(a,a^{r})=1$, if $0\leq r<n-1,$ and $C(a,a^{n-1})=-1.$ This
can be written in a short manner as $C(a^{r},a)=(-1)^{^{\delta (n-1,r)}}$,
where $\delta $ is the delta of Kronecker.

The classification problem in the first case is identical as in the complex
case. Hence, we will restrict to the second case, i.e., when $n$ is even and 
$\alpha <0.$

Since $T_{a}$ sends $v_{a^{k}}$ into $v_{a^{k+1}},$ the linear map $T_{a}$
permutes cyclically the basis $B$, with the exception that it sends $%
v_{a^{n-1}}$ to $-v_{a^{0}}=-1.$ Hence, the minimal polynomial for $T_{a}$
must be $Y^{n}+1=0$. From this, the eigenvalues of $T_{a}$ are the set of
all complex $n$-roots of $-1$, that we will denote by $\{\omega _{1},\ldots
,\omega _{n}\}$. For each $1\leq $ $j\leq n,$ we define $z_{j}=%
\tsum_{k=0}^{n-1}\omega _{j}^{k}v_{a^{k}}.$ Let us see the effect of
multiplying $v_{a^{r}}$ on $z_{j}:$ 
\begin{eqnarray}
v_{a^{r}}\tsum_{k=0}^{n-1}\omega _{j}^{k}v_{a^{k}}
&=&\tsum_{k=0}^{n-1}\omega _{j}^{k}C(a^{r},a^{k})v_{a^{k+r}}  \notag \\
&=&\omega _{j}^{-r}\tsum_{k=0}^{n-r-1}\omega
_{j}^{k+r}C(a^{r},a^{k})v_{a^{k+r}}  \notag \\
&&+\omega _{j}^{-r}\tsum_{k=n-r}^{n-1}\omega
_{j}^{k+r}C(a^{r},a^{k})v_{a^{k+r}}  \label{je1}
\end{eqnarray}%
The first sum of (\ref{je1}) may be rewritten as $\omega
_{j}^{-r}\tsum_{s=r}^{n-1}\omega _{j}^{s}C(a^{r},a^{s-r})v_{a^{s}}$ (with $%
s=k+r$). If $s=k-(n-r)$, we see that the second sum in (\ref{je1}) can be
written as 
\begin{equation*}
(-1)\omega _{j}^{-r}\tsum_{s=0}^{r-1}\omega
_{j}^{s}C(a^{r},a^{s+n-r})v_{a^{n+s}},
\end{equation*}%
which is equal to $(-1)\omega _{j}^{-r}\tsum_{s=0}^{r-1}\omega
_{j}^{s}C(a^{r},a^{n+s-r})v_{a^{s}},$ since $\omega _{j}^{n}=-1.$ Hence, 
\begin{equation}
v_{a^{r}}z_{j}=(-1)\omega _{j}^{-r}\tsum_{s=0}^{r-1}\omega
_{j}^{s}C(a^{r},a^{n+s-r})v_{a^{s}}+\omega _{j}^{-r}\tsum_{s=r}^{n-1}\omega
_{j}^{s}C(a^{r},a^{s-r})v_{a^{s}}  \label{LLL}
\end{equation}

Now, $\omega _{j}^{-1}$ is an eigenvalue of $z_{j}.$ Indeed, if we let $r=1$
in the equation above, 
\begin{eqnarray*}
T_{a}(z_{j}) &=&v_{a}z_{j}=(-1)\omega _{j}^{-1}C(a^{1},a^{n-1})v_{a^{0}} \\
&&+\omega _{j}^{-1}\tsum_{s=1}^{n-1}\omega _{j}^{s}C(a,a^{s-1})v_{a^{s}} \\
&=&\omega _{j}^{-1}v_{a^{0}}+\omega _{j}^{-1}\tsum_{s=1}^{n-1}\omega
_{j}^{s}C(a,a^{s-1})v_{a^{s}}=\omega _{j}^{-1}z_{j}.
\end{eqnarray*}%
since $C(a^{1},a^{s-1})=1,$ when $s=1,\ldots ,n-1.$

On the other hand, form (\ref{maravilla1}) is obtained:

\begin{eqnarray*}
T_{b}(T_{a}(z_{j})) &=&q(b,a)T_{a}(T_{b}(z_{j})) \\
T_{b}(\omega _{j}^{-1}z_{j}) &=&q(b,a)T_{a}(T_{b}(z_{j})) \\
\omega _{j}^{-1}T_{b}(z_{j}) &=&q(b,a)T_{a}(T_{b}(z_{j})) \\
q(a,b)\omega _{j}^{-1}T_{b}(z_{j}) &=&T_{a}(T_{b}(z_{j})).
\end{eqnarray*}%
Hence, $T_{b}(z_{j})$ is an eigenvector associated to the eigenvalue $%
q(a,b)\omega _{j}^{-1}.$But since $T_{a}$ has $n$ different eigenvalues
there must be $\omega _{i}$ and $z_{i}$ such that $\omega
_{i}^{-1}=q(a,b)\omega _{j}^{-1},$ and $T_{b}(z_{j})=\mathcal{\sigma }%
_{b,j}z_{i}$, for some real number $\mathcal{\sigma }_{b,j}\neq 0.$ Letting $%
b=a^{r}$ we get 
\begin{eqnarray}
T_{a^{r}}(z_{j}) &=&\mathcal{\sigma }_{a^{r},j}z_{i}=\mathcal{\sigma }%
_{a^{r},j}\tsum_{k=0}^{n-1}\omega _{i}^{k}v_{a^{k}}  \notag \\
&=&\mathcal{\sigma }_{a^{r},j}\tsum_{s=0}^{n-1}q(a^{r},a)^{s}\omega
_{j}^{s}v_{a^{s}}  \label{LL3}
\end{eqnarray}%
Comparing coefficients in (\ref{LLL}) and (\ref{LL3}) we see that 
\begin{equation}
(-1)\omega _{j}^{-r}C(a^{r},a^{n+s-r})=\mathcal{\sigma }%
_{a^{r},j}q(a^{r},a)^{s},\text{ if }0\leq s\leq r-1,  \label{E1}
\end{equation}%
and 
\begin{equation}
\omega _{j}^{-r}C(a^{r},a^{s-r})=\mathcal{\sigma }_{a^{r},j}q(a^{r},a)^{s},%
\text{ \ if }r\leq s\leq n-1.  \label{E2}
\end{equation}%
Letting $s=r$ in (\ref{E2}), we obtain $\mathcal{\sigma }%
_{a^{r},j}=q(a^{r},a)^{-r}\omega _{j}^{-r}.$ Substituting this values in (%
\ref{E1}) we finally get 
\begin{equation*}
(-1)C(a^{r},a^{n+s-r})=q(a,a^{r})^{r-s}\text{, if }0\leq s\leq r-1.
\end{equation*}%
Letting $k=n+s-r$ we see that 
\begin{equation}
C(a^{r},a^{k})=(-1)q(a,a^{r})^{n-k}=(-1)q(a^{r},a)^{k},\text{ if }n-r\leq
k\leq n-1,  \label{F1}
\end{equation}%
since $q(a^{r},a)^{n}=(\omega _{j}^{-1}\omega _{i})^{n}=1.$ In a similar
manner 
\begin{equation}
C(a^{r},a^{k})=q(a^{r},a)^{k},\text{ if }0\leq k\leq n-r-1.  \label{F2}
\end{equation}

But we know that $C(a,a^{r})=(-1)^{^{\delta (n-1,r)}}$, from which 
\begin{equation*}
q(a^{r},a)=C(a^{r},a)C(a,a^{r})^{-1}=(-1)^{^{\delta (n-1,r)}}C(a^{r},a).
\end{equation*}%
The equalities (\ref{F1} and \ref{F2}) may be written in a simpler form as 
\begin{eqnarray*}
C(a^{r},a^{k}) &=&(-1)^{^{k\delta (n-1,r)}}C(a^{r},a)^{k},\text{ \ \ \ if }%
0\leq k\leq n-r-1. \\
C(a^{r},a^{k}) &=&(-1)^{^{k\delta (n-1,r)+1}}C(a^{r},a)^{k},\text{ \ if }%
n-r\leq k\leq n-1.
\end{eqnarray*}%
From this discussion we deduce the following theorem.

\begin{theorem}
\label{complicado1}Let $G=\left\langle a^{r}:r=0,\ldots ,2m-1\right\rangle $
be a cyclic group of order $n=2m$. Let $W=\oplus _{r=0}^{n-1}W_{a^{r}}$ be a 
$G$-graded twisted real algebra with symmetric associativity function $r$.
Let $B$ denote the standard basis for $W.$ Then $W$ is completely determined
by the function $f:G\rightarrow A,$ $f(a^{r})=C_{B}(a^{r},a).$ Thus, there
are exactly $2\left\vert A\right\vert ^{n-2}$ $G$-graded twisted non
isomorphic algebras over the reals.
\end{theorem}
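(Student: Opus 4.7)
The plan is to analyze the two subcases of the standard-basis construction independently (Case~1 with $\alpha>0$, which reproduces the complex pattern of the previous section verbatim; and Case~2 with $\alpha<0$, producing the sign-twisted pattern derived just above the theorem), count graded-isomorphism classes within each, and then verify that no graded isomorphism can connect the two subcases. The final count $2|A|^{n-2}$ will be $|A|^{n-2}+|A|^{n-2}$.

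For each subcase, the derivations preceding the theorem statement already express every structure constant $C(a^r,a^k)$ as an explicit function of the single-variable data $f(a^r)=C_B(a^r,a)$: in Case~1 one has $C(a^r,a^k)=f(a^r)^k$ (from the formula $C(a^r,a^k)=q(a^r,a)^k$ that closed the complex discussion), and in Case~2 one has the sign-twisted identities (\ref{F1})--(\ref{F2}). Hence $W$ is completely determined by $f$ in each case. To count graded-isomorphism classes inside a single subcase, I would imitate the proof of Theorem~\ref{complicado2} almost word-for-word. By Theorem~\ref{clasifica cohomology} and the remark $d^2C_i=r_i$, one has $W_1\cong W_2$ iff $r_1=r_2$; using (\ref{super2}) (together with its Case~2 analog obtained from (\ref{F1})--(\ref{F2})), this is equivalent to $f_1f_2^{-1}\in\ker(d^1)=\mathrm{Hom}(G,A)$. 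The normalization $f_i(a)=C_i(a,a)=1$ (valid for $n\ge 4$; the case $n=2$ is handled by a direct check) forces any such homomorphism $h$ to satisfy $h(a)=1$, and hence $h\equiv 1$ on the cyclic group $G=\langle a\rangle$. Therefore $f_1=f_2$. Conversely, every $f\colon G\to A$ with $f(e)=f(a)=1$ arises from a genuine $G$-graded twisted algebra (the symmetry of $r$ is automatic from the explicit formula, which is symmetric in $r$ and $s$). This gives $|A|^{n-2}$ classes per subcase, one for each independent choice of $f(a^r)$ with $r=2,\dots,n-1$.

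Finally, I would show that the two subcases yield disjoint families of algebras, the invariant being the sign of $v_a^n\in\mathbb{R}\cdot 1$ (which equals $+1$ in Case~1 and $-1$ in Case~2). The crucial observation is that for even $n=2m$, any rescaling $v_a\mapsto\mu v_a$ with $\mu\in\mathbb{R}^*$ multiplies $v_a^n$ by $\mu^n>0$; similarly, replacing the generator $a$ by another generator $a^j$ (and rescaling) preserves the sign of the analogous invariant. Hence, if $\phi\colon W_1\to W_2$ were a graded isomorphism between a Case~1 algebra and a Case~2 algebra, writing $\phi(v_a)=\lambda v_a'$ and applying $\phi$ to $v_a^n=+1$ would force $\lambda^n(v_a')^n=1$, i.e.\ $\lambda^n=-1$, impossible for real $\lambda$ and even $n$. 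The main obstacle is exactly this last step: making precise that the sign of $v_a^n$ is a genuine graded-isomorphism invariant (not merely a basis-choice artifact), and this rests essentially on the evenness of $n$. Combining the two counts then yields exactly $2|A|^{n-2}$ graded-isomorphism classes.
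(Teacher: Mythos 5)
Your argument is correct and follows the paper's own route: the paper offers no separate proof beyond the discussion preceding the statement (the split into the two standard-basis cases and the formulas (\ref{F1})--(\ref{F2}) expressing every $C(a^{r},a^{k})$ through $f(a^{r})=C(a^{r},a)$), and you reassemble exactly that discussion, counting $\left\vert A\right\vert ^{n-2}$ classes per case by the argument of Theorem \ref{complicado2}. What you add, and what the paper leaves entirely tacit, is the proof that the two cases are disjoint: the sign of the left-iterated $n$-fold product of a nonzero $v_{a}\in W_{a}$ is unchanged under $v_{a}\mapsto \lambda v_{a}$ because $n$ is even, so it is a graded-isomorphism invariant separating the two families; this step is genuinely needed to justify the factor $2$, so your write-up is in this respect more complete than the paper's. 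One caution: the biconditional you cite, ``$W_{1}\cong W_{2}$ iff $r_{1}=r_{2}$,'' is not what Theorem \ref{clasifica cohomology} gives over $\mathbb{R}$; that theorem also demands $[C_{1}C_{2}^{-1}]=1$ in $H^{2}(G,\mathbb{R}^{\ast })$, which for $G=\mathbb{Z}_{2m}$ is the nontrivial group $\{\pm 1\}$. Indeed your own disjointness argument refutes the biconditional: a Case 1 algebra and a Case 2 algebra with the same $f$ have equal associativity functions (their structure constants differ by a $\{\pm 1\}$-valued $2$-cocycle representing the nontrivial class) yet are not graded-isomorphic. Your count is unaffected, because inside a fixed case you only need the trivial implication $f_{1}=f_{2}\Rightarrow C_{1}=C_{2}\Rightarrow W_{1}\cong W_{2}$ together with the valid implication $W_{1}\cong W_{2}\Rightarrow r_{1}=r_{2}\Rightarrow f_{1}=f_{2}$; but you should route the justification through these two implications rather than through the stated ``iff'' (the same overstatement occurs in the paper's proof of Theorem \ref{complicado2}, whose claim for $k=\mathbb{R}$ and even $n$ is precisely what the present theorem's factor of $2$ corrects).
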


\begin{acknowledgement}
The authors want to express their gratitud to the Universidad Nacional de
Colombia for their suupport.
\end{acknowledgement}

\end{document}